\newtheorem{theorem}{Theorem}
\newtheorem{conjecture}[theorem]{Conjecture}
\newtheorem{definition}[theorem]{Definition}
\newtheorem{lemma}[theorem]{Lemma}
\newtheorem{notation}[theorem]{Notation}
\begin{document}

\title{The Title of an IEEE Transactions Conference Article}

%

\title{Information Divergence is more $\chi^2$-distributed than the $\chi
^2$-statistics}
\author{\authorblockN{Peter Harremo{\"e}s}
\authorblockA{Copenhagen Business College\\
Copenhagen, Denmark\\
Email: harremoes@ieee.org}
\and\authorblockN{G{\'a}bor Tusn{\'a}dy}
\authorblockA{R\'enyi Institute of Mathematics\\
Budapest, Hungary\\
Email: tusnady@renyi.hu}}%
%

\maketitle
%

\begin{abstract}%

For testing goodness of fit it is very popular to use either the $\chi^{2}%
$-statistic or $G^{2}$-statistics (information divergence). Asymptotically
both are $\chi^{2}$-distributed so an obvious question is which of the two
statistics that has a distribution that is closest to the $\chi^{2}%
$-distribution. Surprisingly, when there is only one degree of freedom it
seems like the distribution of information divergence is much better
approximated by a $\chi^{2}$-distribution than the $\chi^{2}$-statistic. For
random variables we introduce a new transformation that transform several
important distributions into new random variables that are almost Gaussian.
For the binomial distributions and the Poisson distributions we formulate a
general conjecture about how close their transform are to the Gaussian. The
conjecture is proved for Poisson distributions.%

\end{abstract}%

\section{Choice of statistic}

We consider the problem of testing goodness-of-fit in a discrete setting. Here
we shall follow the classic approach to this problem as developed by Pearson,
Neyman and Fisher. The question is whether a sample with observation counts
$\left(  X_{1},X_{2},\dots,X_{k}\right)  $ has been generated by the
distribution $Q=\left(  q_{1},q_{2},\dots,q_{k}\right)  .$ For sample size $n$
the counts $\left(  X_{1},X_{2},\dots,X_{k}\right)  $ is assumed to have a
multinomial distribution. We introduce the empirical distribution $\hat
{P}=\left(  \frac{X_{1}}{n},\frac{X_{2}}{n},\dots,\frac{X_{k}}{n}\right)  $
where $n$ denotes the sample size $n=X_{1}+X_{2}+\dots+X_{k}.$ Often one uses
one of the Csisz\'{a}r \cite{Csiszar1963} $f$-divergences
\begin{equation}
D_{f}\left(  \hat{P},Q\right)  =%
{\textstyle\sum\limits_{j=1}^{k}}
q_{j}f\left(  \frac{\hat{p}_{j}}{q_{j}}\right)  . \label{Csi}%
\end{equation}
The null hypothesis is accepted if the test statistic $D_{f}\left(  \hat
{P},Q\right)  $ is small and rejected if $D_{f}\left(  \hat{P},Q\right)  $ is
large. Whether $D_{f}\left(  \hat{P},Q\right)  $ is considered to be small or
large depends on the significance level \cite{Lehman2005}. The most important
cases are obtained for the convex functions $f(t)=n(t-1)^{2}$ and $f(t)=2nt\ln
t$\ leading to the \emph{Pearson }$\chi^{2}$\emph{-statistic}%
\begin{equation}
\chi^{2}=%
{\textstyle\sum\limits_{j=1}^{k}}
\frac{(X_{nj}-nq_{nj})^{2}}{nq_{nj}} \label{Pears}%
\end{equation}
or the \emph{likelihood ratio statistic }%
\begin{equation}
G^{2}=2%
{\textstyle\sum\limits_{j=1}^{k}}
X_{nj}\ln\frac{X_{nj}}{nq_{nj}} \label{lik}%
\end{equation}
which is a scaled version of \emph{information divergence} that we will denote
$D$ without subscript. In this paper we shall focus on the case where there
are only two bins because this allow us to formulate in a qualitattive manner
in terms of what we will call the intersection conjecture. With only two bins
the multinomial distribution of counts can be described by a binomial
distribution. We will also consider the limiting case where the binomial
distribution is replaced by a Poisson distribution. This corresponds in a
sense to having only one bin.

\begin{notation}
Please note that we follow the notation from \cite{Everitt1998} by denoting
the likelihood ratio statistic by $G^{2}$ rather than $G$ as done in many
textbooks and articles. Our $G^{2}$ should not be confused with Getis--Ord's
$G$ statistic \cite{Zhang2008}.
\end{notation}

One way of choosing between various statistics is by computing their
asymptotic efficiency. In 1985 it was proved that the $G^{2}$-statistic\ is
more efficient in the Bahadur sense than the $\chi^{2}$-statistic, and this
result has been extended in a number of papers \cite{Quine1985, Beirlant2001,
Gyorfi2000, Harremoes2008, Harremoes2008e}. The asymptotic Bahadur efficiency
of $G^{2}$ implies that a much smaller sample size is needed when using
$G^{2}$ than when using $\chi^{2}$ if a fixed power should be achieved at a
very small significance level for some alternative. Since this type of result
only holds asymptotically for large sample sizes it may be difficult to use
for a specific finite sample size. Therefore we will turn our attention to
another important property for the choice of statistic.

\input{divergenceplot.TpX}\input{divergenceplot2.TpX}For the practical use of
a statistic it is important to calculate or estimate the distribution of the
statistic. This can be done by exact calculation, by approximations, or by
simulations. Exact calculations may be both time consuming and difficult.
Simulation often requires statistical insight and programming skills.
Therefore most statistical tests use approximations to calculate the
distribution of the statistic. For a fixed number of bins the distribution of
the $\chi^{2}$-statistic becomes closer and closer to the $\chi^{2}%
$-distributions as the sample size tends to infinity. For a large sample size
the empirical distribution will with high probability be close to the
generating distribution and the Csisz\'{a}r $f$-divergence $D_{f}$ can be
approximated by a scaled version of the $\chi^{2}$-statistic%
\[
D_{f}\left(  P,Q\right)  \approx\frac{f^{\prime\prime}\left(  0\right)  }%
{2}\cdot\chi^{2}\left(  P,Q\right)  .
\]
Therefore the distribution of any $f$-divergence may be approximated by a
scaled $\chi^{2}$-distribution, i.e. a $\Gamma$-distribution. From this
argument one might get the impression that the distribution of the $\chi^{2}%
$-statistic is closer to the $\chi^{2}$-distribution. Figure \ref{Fig1} and
Figure \ref{Fig2} show that this is far from the the case. Both figures are
Q-Q plots where for each $p\in\left[  0,1\right]  $ a point is plottet with
the $p$ quantile the square of a standard Gaussian as first coordinate and the
$p$ quantile of the distribution of the statistic as the second coordinate.
Figure \ref{Fig1} shows that the $G^{2}$-statistic is almost as $\chi^{2}%
$-distributed as it can be when one takes into account that the likelihood
ratio statistic has a discrete distribution. Each step is intersected very
close to its midpoint. Figure \ref{Fig2} shows that the distribution of the
$\chi^{2}$-statistic deviates systematically from the $\chi^{2}$-distribution
for small significance levels. For larger significance levels both statistics
will give approximately the same results which is related to the fact that the
two statistics have the same asymptotic Pitman efficiency. The two plots show
that at least in some cases the distribution of the $G^{2}$-statistic is much
closer to a $\chi^{2}$-distribution than Pearson statistic is. The next
question is whether there are situations where the likelihood ratio statistic
is not approximately $\chi^{2}$-distributed. For binomial distributions that
are very skewed the intersection property of Figure \ref{Fig1} is not
satisfied when the $G^{2}$-statistic is plotted against the $\chi^{2}%
$-distribution so in the rest of this paper a different type of plots will be
used. For getting a better approximation another strategy is Bartlett's
adjustment, see \cite{Barndorff-Nielsen1988}.

T. Dunning \cite{Dunning1993} has made a summary of what the typical
recommendations are about whether one should use the $\chi^{2}$-statistic or
the $G^{2}$-Statistic. The short version is that the statistic is
approximately $\chi^{2}$-distributed when each bin contains at least 5
observations or the calculated variance for each bin is at least 5, and if any
bin contains more than twice the expected number observations then the $G^{2}%
$-statistic is preferable to the $\chi^{2}$-statistic. Our main idea is to
\emph{change} the statistic into a signed version as it was introduced by
Barndorff-Nielsen as a signed likelihood ratio \cite{Barndorff-Nielsen1994}.
We call the operation $G$-transform and change our orientation from hypothesis
testing to normal approximation of distributions of sums of independent
variables. Our main observation is that the $G$-transform covers probabilities
in the whole domain including large deviations.

\begin{notation}
In the rest of this paper we will let $\tau$ denote the circle constant $2\pi$
and let $\phi$ denote the standard Gaussian density%
\[
\frac{\exp\left(  -\frac{z^{2}}{2}\right)  }{\tau^{1/2}}.
\]
We let $\Phi$ denote the distribution function of the standard Gaussian%
\[
\Phi\left(  t\right)  =\int_{-\infty}^{t}\phi\left(  z\right)
~\text{\textrm{d}}z~.
\]

\end{notation}

\section{The $G$-transform and its distribution}

Here we shall introduce a transformation that is useful for our understanding
of the fine structure of the distribution of the likelihood ratio statistics.
Consider a 1-dimensional exponential family $P_{\beta}$ where%
\[
\frac{\text{\textrm{d}}P_{\beta}}{\text{\textrm{d}}P_{0}}\left(  x\right)
=\frac{\exp\left(  \beta\cdot x\right)  }{Z\left(  \beta\right)  }%
\]
and $Z$ denotes the partition function given by%
\[
Z\left(  \beta\right)  =\int\exp\left(  \beta\cdot x\right)  ~\text{\textrm{d}%
}P_{0}x~.
\]
Let $P^{\mu}$ denote the element in the exponential family with mean value
$\mu.$ Let $\mu_{0}$ denote the mean value of $P_{0}.$ Then%
\[
D\left(  P^{\mu}\Vert P_{0}\right)  =\int\ln\left(  \frac{\text{\textrm{d}%
}P^{\mu}}{\text{\textrm{d}}P_{0}}\left(  x\right)  \right)  ~\text{\textrm{d}%
}P^{\mu}x.
\]
To verify that $D\left(  P^{\mu}\Vert P_{0}\right)  $ is $\chi^{2}%
$-distributed it is sufficient to verify that the square root is a centered
Gaussian. This motivates the next definition:

\begin{definition}
Let $X$ be a random variable with distribution $P_{0}.$ Then the
$G$\emph{-transform} $G\left(  X\right)  $ of $X$ is the random variable given
by%
\[
G\left(  x\right)  =\left\{
\begin{array}
[c]{cc}%
-\left(  2D\left(  P^{x}\Vert P_{0}\right)  \right)  ^{1/2}, & \text{for
}x<\mu_{0};\\
\left(  2D\left(  P^{x}\Vert P_{0}\right)  \right)  ^{1/2}, & \text{for }%
x\geq\mu_{0}.
\end{array}
\right.
\]

\end{definition}

Using $G\left(  x\right)  $ instead of $D\left(  P^{x}\Vert P_{0}\right)  $ as
statistic is essentially the difference between using a one-sided test instead
of a two-sided test. With this definition one easily sees that the
$G$-transform of a Gaussian is a standard Gaussian. In \cite{Gyorfi2012} it
was verified that if a random variable $X$ satisfies a Cram\'{e}r condition
the then with a minor correction $G_{n}\left(  \frac{1}{n}\sum_{i=1}^{n}%
X_{i}\right)  $ is Gaussian within a factor of the order $1+O\left(  \frac
{1}{n}\right)  .$ In this paper we are interested in sharp bounds rather than
asymptotic results.

Now we can make quantile plots of the Gaussian distribution against the
distribution of the $G$-transform of various random variables. On Figure 3-7
the $G$-transform of some binomial and Poisson distributions are compared with
the standard Gaussian via their Q-Q plot. In these plots the red lines
correspond to the bounds $P\left(  X\leq x\right)  \leq\exp\left(  -D\left(
P^{x}\Vert P_{0}\right)  \right)  $ for $x\leq\mu_{0}$ and $P\left(  X\geq
x\right)  \leq\exp\left(  -D\left(  P^{x}\Vert P_{0}\right)  \right)  $ for
$x\geq\mu_{0}$.\input{bin0,5.TpX} \input{bin0,3.TpX} \input{bin0,1.TpX} \input{lamdaLig20.TpX}

These plots support the following conjecture:

\begin{conjecture}
[The intersection property]Let $M$ denote a binomial distributed or Poisson
distributed random variable and let $G\left(  M\right)  $ denote the
$G$-transform of $M.$ The quantile transform between $G\left(  M\right)  $ and
a standard Gaussian $Z$ is a step function and the identity function
intersects each step, i.e.
\[
P\left(  M<m\right)  <P\left(  Z\leq G\left(  m\right)  \right)  <P\left(
M\leq m\right)
\]
for all integers $m.$
\end{conjecture}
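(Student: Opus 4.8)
The plan is to recast the two inequalities as the single statement that the number $\Phi(G(m))$ falls strictly inside the jump of the distribution function of $M$ at $m$, and then, in the Poisson case, to prove this by a one‑parameter monotonicity argument. Write $\bar F(m)=P(M\ge m)$. Since $P(M<m)=1-\bar F(m)$ and $P(M\le m)=1-\bar F(m+1)$, the conjecture at $m$ reads $\bar F(m+1)<1-\Phi(G(m))<\bar F(m)$, and combining the instances at $m$ and at $m+1$ the whole conjecture is equivalent to the single two‑sided estimate
\[
1-\Phi\bigl(G(m)\bigr)<\bar F(m)<1-\Phi\bigl(G(m-1)\bigr)\qquad\text{for all integers }m .
\]
Thus it suffices to sandwich each tail probability between two consecutive values of the Gaussian tail read off at the transform points. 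The ranges $m\ge\mu_0$ and $m\le\mu_0$ (where $G(m)\le 0$ and $1-\Phi(G(m))\ge\tfrac12$) are handled by the same scheme, while the boundary values $m=0,1$ (and, for the binomial, $m=n-1,n$) fall out of the crude bound $1-\Phi(t)\le\tfrac12 e^{-t^{2}/2}$.

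For a Poisson variable $M$ with mean $\lambda$ I would exploit that $\lambda$ is a free parameter and that both sides above are explicit functions of it with tractable derivatives. Telescoping the partial sums gives $\partial_\lambda\bar F(m)=e^{-\lambda}\lambda^{m-1}/(m-1)!$, the Poisson weight at $m-1$. Since $\tfrac12 G_\lambda(m)^{2}=D\bigl(P^{m}\Vert P_{0}\bigr)=m\ln(m/\lambda)-m+\lambda=:D_\lambda(m)$, differentiation yields $G_\lambda(m)\,\partial_\lambda G_\lambda(m)=1-m/\lambda$ and hence
\[
\partial_\lambda\Bigl(1-\Phi\bigl(G_\lambda(m)\bigr)\Bigr)=\frac{e^{-D_\lambda(m)}}{(2\pi)^{1/2}}\cdot\frac{m/\lambda-1}{\bigl(2D_\lambda(m)\bigr)^{1/2}} .
\]
Both $\bar F(m)$ and $1-\Phi(G_\lambda(m))$ vanish as $\lambda\to 0$ (for $m\ge 1$) and tend to $1$ as $\lambda\to\infty$, and at $\lambda=m$ the Ramanujan‑type identity $P(\mathrm{Poi}(n)\ge n)=\tfrac12+\theta_n P(\mathrm{Poi}(n)=n)$ with $\theta_n\in(0,1)$ locates $\bar F(m)$ just above $1-\Phi(0)=\tfrac12$.

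Comparing the two derivatives, after inserting Stirling's formula for $(m-1)!$, reduces each half of the estimate to an inequality between elementary functions of the single variable $x=\ln(m/\lambda)$; one is led to statements such as $(e^{x}-1)^{2}>2(x-1+e^{-x})$ and $e^{x}(x^{2}-2x+2)\ge 2$ for $x\ge 0$, each proved by checking the value at $x=0$ and the sign of a derivative (for the second, the derivative is $e^{x}x^{2}\ge 0$). Where the relevant derivative dominates — which, up to the Stirling remainder, happens away from $\lambda=m$ — one gets the estimate by integrating up from $\lambda=0$, using there only the trivial Stirling bound $k!\ge(2\pi k)^{1/2}(k/e)^{k}$; near $\lambda=m$ the derivative inequality changes direction, and one instead bootstraps from the value supplied by the Ramanujan identity at the endpoint and checks that the gap between the two functions does not change sign before it closes at $\lambda\to\infty$.

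The step I expect to be the main obstacle is exactly this near‑$\lambda=m$ region together with the bookkeeping of the Stirling remainder $\theta_k\in(0,1/(12k))$: there the margin in the elementary inequalities is only of cubic order in $x$ while the correction from the remainder is quadratic, so the gap between the two functions need not even be monotone in a neighbourhood of $\lambda=m$ although it stays positive, and one must combine a \emph{sharp} lower bound for the Gaussian tail with a \emph{sharp} lower bound for $\bar F(m)$ rather than the crude bounds that suffice elsewhere. Finally, the binomial case is expected to remain a genuine conjecture: the same scheme would replace the incomplete‑Gamma / Poisson‑weight identities by incomplete‑Beta / binomial ones and differentiate in one of the two parameters, but the elementary reductions then involve both parameters and deteriorate for highly skewed binomials, which is precisely the regime where the intersection property is most delicate and where Figure~\ref{Fig1}'s clean picture already starts to fail.
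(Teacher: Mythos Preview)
This is stated as a conjecture, and the paper does not prove the binomial case; you correctly anticipate that. The Poisson case, however, is established later (Theorem~\ref{Thm:Poisson}), and there your route differs from the paper's in one important respect.

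For the half $P(M<m)\le\Phi(G(m))$ the paper does not differentiate in $\lambda$ at all. It invokes the Poisson--Gamma link $P(M\ge m)=P(T\le\lambda)$ for $T\sim\Gamma(m,1)$ together with the identity $G_{P}(m)=-G_{\Gamma}(\lambda)$, which converts the Poisson statement into the assertion that the $G$-transform of a Gamma variable is stochastically below a standard Gaussian (Theorem~\ref{Thm:Gamma}). That theorem is proved by showing the density ratio $\phi(w)/f(w)$ is monotone (Lemma~\ref{increacedensity}); after a change of variables this becomes the monotonicity of $tG_{\Gamma}'(t)$, and finally the elementary fact that $\ell(u)=u-2\ln u-1/u$ has the sign of $u-1$ because $\ell'(u)=(1-1/u)^{2}\ge 0$. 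No Stirling, no remainder, no special region near $\lambda=m$.

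Your derivative comparison is in fact this same likelihood-ratio computation in disguise: the ratio $\partial_\lambda\bar F(m)\big/\partial_\lambda\bigl(1-\Phi(G_\lambda(m))\bigr)$ equals, up to the constant $\Gamma(m)\big/\bigl(\tau^{1/2}m^{m}e^{-m}\bigr)$, exactly the function $\pm(u-1)/\sqrt{2(u-1-\ln u)}$ the paper analyses --- and that constant is precisely what forces you to introduce Stirling. Passing to the Gamma side absorbs it into $\Gamma(m)$ and eliminates the remainder bookkeeping you flag as the main obstacle. (A small slip: your displayed derivative of $1-\Phi(G_\lambda(m))$ is only correct for $m>\lambda$; for $m<\lambda$ the sign of $G$ flips and the expression must remain positive.) For the other half $\Phi(G(m))\le P(M\le m)$ the paper's own argument is also only an outline --- case checking for $m\le 5$ and an appeal to a monotonicity property for larger $m$ --- so neither treatment is a fully worked proof of that direction.
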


Another way of reformulating the intersection property is that in the
stochastic ordering $X$ should be less than a random variable with point
probabilities $\Phi\left(  G\left(  m\right)  \right)  -\Phi\left(  G\left(
m-1\right)  \right)  $ and greater than a random variable with point
probabilities $\Phi\left(  G\left(  m+1\right)  \right)  -\Phi\left(  G\left(
m\right)  \right)  ,$ where $G\left(  -1\right)  $ is defined as $-\infty$ and
$G\left(  n+1\right)  $ is defined to be $\infty$ for a binomial distribution
number parameter $n.$ The conjecture is so well supported by numerical
calculations that we would not hesitate to recommend it for estimation of tail
probabilities for the binomial distributions in goodness of fit tests instead
of using the usual $\chi^{2}$-approximation of the $\chi^{2}$-statistic.

As we see both skewed binomial distributions and Poisson distributions have
different step sizes for positive and negative values. Although the quantile
transform between $G\left(  M\right)  $ and a standard Gaussian has the
intersection property interference between the step sizes may have the effect
that the quantile transform between the $G^{2}$-statistic and the $\chi^{2}%
$-distribution does not necessarily have the intersection property. We believe
that the $G$-transform is always closer to a standard Gaussian than the
original. We have no idea, which distributions have the intersection property.

\section{The link to waiting times}

Hitherto we have discussed inequalities for discrete distributions but there
is an interesting link to inequalities for continuous distributions associated
with waiting times. Assume that $M$ is Poisson distributed with mean $t$ and
$T$ is Gamma distributed with shape parameter $m$ and scale parameter 1, i.e.
the distribution of the waiting time until $m$ observations from an Poisson
process with intensity 1. Then%
\begin{equation}
P\left(  M\geq m\right)  =P\left(  T\leq t\right)  .
\label{Eq:PoissonGammaLink}%
\end{equation}
The Gamma distribution $\Gamma\left(  m,\theta\right)  $ has density%

\[
f\left(  t\right)  =\frac{1}{\theta^{m}}\frac{1}{\Gamma\left(  m\right)
}t^{m-1}\exp\left(  -\frac{t}{\theta}\right)
\]
so the divergence can be calculated as%
\[
D\left(  \Gamma\left(  m,\theta_{1}\right)  \Vert\Gamma\left(  m,\theta
_{2}\right)  \right)  =m\left(  \frac{\theta_{1}}{\theta_{2}}-1-\ln
\frac{\theta_{1}}{\theta_{2}}\right)  .
\]
In particular%
\[
D\left(  \Gamma\left(  m,\frac{t}{m}\right)  \Vert\Gamma\left(  m,1\right)
\right)  =t-m-m\ln\frac{t}{m}.
\]
Next we note that
\[
D\left(  Po\left(  m\right)  \Vert Po\left(  t\right)  \right)  =D\left(
\Gamma\left(  m,\frac{t}{m}\right)  \Vert\Gamma\left(  m,1\right)  \right)  .
\]
If $G_{P}$ is the $G$-transform for $Po\left(  t\right)  $ and $G_{\Gamma}$ is
the $G$-transform for $\Gamma\left(  m,1\right)  $ then $G_{P}\left(
m\right)  =-G_{\Gamma}\left(  t\right)  .$ This shows that if the
$G$-transforms of the Gamma distributions are close to a Gaussian then so are
the $G$-transforms of the Poisson distributions. Figure \ref{Gammaplot} shows
Q-Q plots of the $G$-transform of some Gamma distributions.\input{Gammaplot.TpX}

We see that the fit with a straight line of slope 1 is extremely good. The
point (0,0) is not on the line reflecting the fact that the means and the
medians of the Gamma distributions do not coincide. In the next section we
shall see that the quantile transform between a Gaussian and the $G$-transform
of Gamma distributions is always below the identity.

\section{The increasing property}

In this section we shall formulate some conditions that are stronger than the
intersection property. The proof of the following lemma is an easy exercise so
we omit the proof.

\begin{lemma}
\label{increacedensity}Let $f_{1}$ and $f_{2}$ be the densities of the random
variables $X_{1}$ and $X_{2}$ with respect to some measure $\mu$ on the real
numbers. If
\[
\frac{f_{1}}{f_{2}}%
\]
is an increasing then $X_{1}$ is less than $X_{2}$ in the usual stochastic ordering.
\end{lemma}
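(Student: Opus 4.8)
The plan is to translate the hypothesis on the densities into a pointwise comparison of the distribution functions $F_i(t)=\int_{(-\infty,t]}f_i\,\mathrm{d}\mu$, $i=1,2$, and to read that comparison off from a single sign change of $f_1-f_2$. The two elementary facts I would start from are that $f_1$ and $f_2$ are probability densities with respect to the same $\mu$, so $\int_{\mathbb R}(f_1-f_2)\,\mathrm{d}\mu=0$, and that on $\{f_2>0\}$ one can factor $f_1-f_2=f_2\,\bigl(\tfrac{f_1}{f_2}-1\bigr)$, so that there the sign of $f_1-f_2$ coincides with the sign of $\tfrac{f_1}{f_2}-1$.

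The key step is the single-crossing property this produces. Monotonicity of $\tfrac{f_1}{f_2}$ means that the region where $f_1>f_2$ lies entirely to one side of the region where $f_1<f_2$, and by $\int_{\mathbb R}f_1\,\mathrm{d}\mu=\int_{\mathbb R}f_2\,\mathrm{d}\mu$ neither region can be all of $\mathbb R$ (the degenerate case $f_1=f_2$ $\mu$-almost everywhere being trivial). Hence there is a threshold $c$ with $f_1-f_2\le 0$ on $\{x<c\}$ and $f_1-f_2\ge 0$ on $\{x>c\}$, up to $\mu$-null sets. With this sign pattern the conclusion follows from two short integrations. For $t\le c$ the integrand in $F_1(t)-F_2(t)=\int_{(-\infty,t]}(f_1-f_2)\,\mathrm{d}\mu$ is $\le 0$, so $F_1(t)\le F_2(t)$. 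For $t>c$ one instead uses the vanishing total integral to write $F_1(t)-F_2(t)=-\int_{(t,\infty)}(f_1-f_2)\,\mathrm{d}\mu$, whose integrand is $\ge 0$ because $(t,\infty)\subseteq(c,\infty)$, so again $F_1(t)\le F_2(t)$. Thus $F_1\le F_2$ on all of $\mathbb R$, which is precisely a statement of stochastic order between $X_1$ and $X_2$; which of the two comes out stochastically larger is fixed by the direction in which $\tfrac{f_1}{f_2}$ is monotone.

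The only place where this ``easy exercise'' is not quite vacuous is the bookkeeping on the sets $\{f_2=0,\ f_1>0\}$ and $\{f_1=0,\ f_2>0\}$, where the ratio equals $+\infty$ and $0$ and the factorisation above does not literally apply. Reading the ratio in the extended reals and only up to $\mu$-null sets, its monotonicity still forces the first of these sets to the right of $c$ and the second to its left, so neither disturbs the sign pattern of $f_1-f_2$; alternatively one simply replaces $\mu$ by its restriction to the common support $\{f_1+f_2>0\}$ at the outset. Beyond this I foresee no genuine obstacle: once the single-crossing of $f_1-f_2$ is in hand the rest is routine, and that single-crossing property is nothing more than the observation that a monotone ratio of two probability densities cannot remain on one side of the value $1$.
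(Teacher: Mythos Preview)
The paper omits the proof entirely, calling it ``an easy exercise,'' so there is nothing to compare against; your single-crossing argument is the standard one and is correct.

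One remark on the direction, since you explicitly hedged on it. Your computation yields $F_1\le F_2$, which in the usual convention ($X\le_{\mathrm{st}}Y\iff F_X\ge F_Y$) says $X_1$ is stochastically \emph{greater} than $X_2$, not less. A quick sanity check confirms this: two unit-variance Gaussians with means $1$ and $0$ have $f_1/f_2=e^{x-1/2}$ increasing, and the mean-$1$ variable is of course the larger one. So the lemma as printed has the inequality reversed; reassuringly, the paper's own application in the Gamma theorem uses it in the direction you derived (they show $\phi/f$ is increasing to conclude the Gaussian dominates $G(T)$). Your caveat that ``which of the two comes out stochastically larger is fixed by the direction in which $f_1/f_2$ is monotone'' is therefore exactly the right instinct.
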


\begin{theorem}
\label{Thm:Gamma}The $G$-transform of a Gamma distributed random variable is
less than a standard Gaussian in the stochastic ordering.

\begin{proof}
Let $T$ be a $\Gamma\left(  m,1\right)  $ distributed random variable with
density $g$. The distribution in the exponential family based on
$\Gamma\left(  m,1\right)  $ with mean $t$ is $\Gamma\left(  m,\frac{t}%
{m}\right)  .$ The $G$-transform is
\[
G\left(  t\right)  =\pm\left(  2D\left(  \Gamma\left(  m,\frac{t}{m}\right)
\Vert\Gamma\left(  m,1\right)  \right)  \right)  ^{1/2}%
\]
where $\pm$ means that we will use $+$ when $t$ is greater than the mean $k$
and use $-$ when $t$ is less than $m.$ For the Gamma distribution we have
\[
\frac{\mathrm{d}\Gamma\left(  m,\frac{t}{m}\right)  }{\mathrm{d}\Gamma\left(
m,1\right)  }\left(  t\right)  =\frac{\exp\left(  t-m\right)  }{\left(
\frac{t}{m}\right)  ^{m}}.
\]
Let $W=G\left(  T\right)  $ with density $f\left(  w\right)  .$ We want to
prove that $\frac{\phi\left(  w\right)  }{f\left(  w\right)  }$ is increasing.
Now
\[
f\left(  w\right)  =\frac{g\left(  G^{-1}\left(  w\right)  \right)
}{G^{\prime}\left(  G^{-1}\left(  w\right)  \right)  }%
\]
so that%
\begin{align*}
\frac{\phi\left(  w\right)  }{f\left(  w\right)  } &  =\frac{\phi\left(
G\left(  t\right)  \right)  G^{\prime}\left(  t\right)  }{g\left(  t\right)
}=\frac{G^{\prime}\left(  t\right)  }{\tau^{1/2}\frac{\mathrm{d}\Gamma\left(
m,\frac{t}{m}\right)  }{\mathrm{d}\Gamma\left(  m,t\right)  }\left(  t\right)
\cdot g\left(  t\right)  }\\
&  =\frac{\Gamma\left(  m\right)  }{\tau^{1/2}m^{m}\exp\left(  -m\right)
}\cdot tG^{\prime}\left(  t\right)  .
\end{align*}
Hence we want to prove that $tG^{\prime}\left(  t\right)  $ is increasing.%
\[
tG^{\prime}\left(  t\right)  =\pm t\frac{2D^{\prime}\left(  t\right)
}{2\left(  2D\right)  ^{1/2}}=\pm m^{1/2}\frac{\frac{t}{m}-1}{\left(  2\left(
\frac{t}{m}-1-\ln\frac{t}{m}\right)  \right)  ^{1/2}}.
\]
With the substitution $u=t/m$ we have to prove that%
\[
\pm\frac{u-1}{\left(  2\left(  u-1-\ln u\right)  \right)  ^{1/2}}%
\]
is increasing. We have
\[
\frac{\mathrm{d}}{\mathrm{d}u}\left(  \pm\frac{u-1}{\left(  2\left(  u-1-\ln
u\right)  \right)  ^{1/2}}\right)  =\pm\frac{u-2\ln u-\frac{1}{u}}{\left(
2\left(  u-1-\ln u\right)  \right)  ^{3/2}}%
\]
so we want to prove that%
\[
\pm\left(  u-2\ln u-\frac{1}{u}\right)  \geq0.
\]
Now we have to prove that $\ell\left(  u\right)  =u-2\ln u-\frac{1}{u}$ is
positive for $u>1$ and negative for $u<1.$ Obviously $\ell\left(  1\right)
=0$ so it is sufficient to prove that $\ell^{\prime}\left(  u\right)  \geq0,$
but
\[
\ell^{\prime}\left(  u\right)  =\left(  1-\frac{1}{u}\right)  ^{2}\geq0.
\]

\end{proof}
\end{theorem}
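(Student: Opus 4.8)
The plan is to invoke Lemma~\ref{increacedensity}: writing $W:=G(T)$ for the $G$-transform of a $\Gamma(m,1)$ random variable $T$, and letting $f$ be the density of $W$, it suffices to verify that the ratio $\phi/f$ of densities is monotone increasing, for then $G(T)$ and a standard Gaussian $Z$ become comparable in the stochastic order in the required direction. So the whole problem is reduced to a monotonicity statement about a density ratio on the real line.

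First I would record the explicit ingredients. The element of the exponential family generated by $\Gamma(m,1)$ with mean $t$ is $\Gamma(m,t/m)$, its divergence from $\Gamma(m,1)$ is $t-m-m\ln(t/m)$, and hence $G(t)=\pm\bigl(2\bigl(t-m-m\ln(t/m)\bigr)\bigr)^{1/2}$ with the sign $+$ for $t>m$ and $-$ for $t<m$. Since $\tfrac{\mathrm{d}}{\mathrm{d}t}\bigl(t-m-m\ln(t/m)\bigr)=1-m/t$ is negative on $(0,m)$ and positive on $(m,\infty)$, the map $t\mapsto G(t)$ is a strictly increasing $C^{1}$ bijection of $(0,\infty)$ onto $\mathbb{R}$, so $W$ has density $f(w)=g\bigl(G^{-1}(w)\bigr)\big/G'\bigl(G^{-1}(w)\bigr)$ where $g$ is the $\Gamma(m,1)$ density. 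The key simplifying observation is that in an exponential family the likelihood ratio between $P^{\mu}$ and $P_{0}$, evaluated at the point $\mu$ itself, equals $\exp\!\bigl(D(P^{\mu}\Vert P_{0})\bigr)$; concretely here $\frac{\mathrm{d}\Gamma(m,t/m)}{\mathrm{d}\Gamma(m,1)}(t)=\exp(t-m)\big/(t/m)^{m}=\exp\!\bigl(D(\Gamma(m,t/m)\Vert\Gamma(m,1))\bigr)$. Because $G(t)^{2}=2D(\Gamma(m,t/m)\Vert\Gamma(m,1))$ we have $\phi(G(t))=\tau^{-1/2}\exp(-D)$, so substituting $w=G(t)$ into $\phi(w)/f(w)=\phi(G(t))G'(t)/g(t)$ makes the $\exp(-D)$ factor cancel against the corresponding factor sitting inside $g(t)$, leaving
\[
\frac{\phi(w)}{f(w)}=\frac{\Gamma(m)}{\tau^{1/2}m^{m}\exp(-m)}\;t\,G'(t).
\]
Thus the claim reduces to showing that $t\mapsto t\,G'(t)$ is increasing.

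Next I would compute $t\,G'(t)$ in closed form. Differentiating gives $t\,G'(t)=\pm\,t\,D'(t)/(2D)^{1/2}$, and with the substitution $u=t/m$ this becomes $\pm\,m^{1/2}\,(u-1)\big/\bigl(2(u-1-\ln u)\bigr)^{1/2}$, which one checks extends continuously across $u=1$. Hence it is enough to prove that $h(u):=\pm\,(u-1)\big/\bigl(2(u-1-\ln u)\bigr)^{1/2}$ is increasing on $(0,\infty)$. A direct differentiation yields
\[
h'(u)=\pm\,\frac{u-2\ln u-\tfrac1u}{\bigl(2(u-1-\ln u)\bigr)^{3/2}},
\]
so everything comes down to showing that $\ell(u):=u-2\ln u-\tfrac1u$ has the same sign as $u-1$. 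Since $\ell(1)=0$ it suffices to check that $\ell$ is nondecreasing, and here the computation collapses pleasantly: $\ell'(u)=1-\tfrac2u+\tfrac1{u^{2}}=\bigl(1-\tfrac1u\bigr)^{2}\ge 0$, which finishes the argument.

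The step I expect to be the main obstacle — or at least the place demanding the most care — is the change-of-variables bookkeeping in the second paragraph: one must confirm that the $\exp(-D)$ terms genuinely cancel, that the residual constant $\Gamma(m)/(\tau^{1/2}m^{m}e^{-m})$ is strictly positive, and that the two sign branches of $G$ patch together so that $t\,G'(t)$ remains finite and continuous at $t=m$, where $G'$ itself diverges. Once the reduction to the monotonicity of $t\,G'(t)$ is secured, the remaining work is the elementary single-variable calculus above, whose crux is the identity $\ell'(u)=(1-1/u)^{2}$.
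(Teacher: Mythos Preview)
Your proof is correct and follows essentially the same route as the paper: invoke Lemma~\ref{increacedensity}, reduce $\phi(w)/f(w)$ to a constant times $t\,G'(t)$, substitute $u=t/m$, and finish via the identity $\ell'(u)=(1-1/u)^{2}$. The extra care you take (bijectivity of $G$, continuity at $u=1$, positivity of the residual constant) only fills in small details the paper leaves implicit.
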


Next we shall formulate an even stronger conjecture and see that it actually
implies that binomial distributions and Poisson distributions have the
intersection property.

\begin{conjecture}
[The increasing property]If $M$ is a binomially or Poisson distributed random
variable with $G$-transform $G\left(  M\right)  $ then
\begin{equation}
m\rightarrow\frac{P\left(  M=m\right)  }{\Phi\left(  G\left(  m+1\right)
\right)  -\Phi\left(  G\left(  m\right)  \right)  }.\label{Eq:increas}%
\end{equation}
is increasing and
\[
m\rightarrow\frac{P\left(  M=m\right)  }{\Phi\left(  G\left(  m\right)
\right)  -\Phi\left(  G\left(  m-1\right)  \right)  }%
\]
is decreasing.
\end{conjecture}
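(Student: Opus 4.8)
The plan is to reduce both halves of the conjecture to a single pair of pointwise monotonicity inequalities for smooth functions of one real variable, and then to lean on the computation already made for Theorem~\ref{Thm:Gamma}. Since the exponential family $P^{x}$ and the divergence $D\left(P^{x}\Vert P_{0}\right)$ are defined and smooth for every real $x$ in the interior of the range of mean values, the $G$-transform extends to a strictly increasing $C^{1}$ function $G$ of that interval onto a subinterval of $\mathbb{R}$, and
\[
\Phi\left(G\left(m+1\right)\right)-\Phi\left(G\left(m\right)\right)=\int_{m}^{m+1}\phi\left(G\left(x\right)\right)G^{\prime}\left(x\right)\,\mathrm{d}x=\int_{m}^{m+1}\psi\left(x\right)\,\mathrm{d}x,
\]
where $\psi=\left(\phi\circ G\right)G^{\prime}$ is the density of the continuous random variable whose $G$-transform is exactly a standard Gaussian; from $GG^{\prime}=D^{\prime}$ one gets the convenient form $\psi\left(x\right)=\phi\left(G\left(x\right)\right)D^{\prime}\left(x\right)/G\left(x\right)$. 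Because of the waiting-time identity \eqref{Eq:PoissonGammaLink}, for a Poisson distribution the point probabilities telescope through the incomplete Gamma function, so there is a nonnegative smooth density $\rho$ with $\int_{m}^{m+1}\rho\left(x\right)\,\mathrm{d}x=P\left(M=m\right)$ for every integer $m$; one may take $\rho\left(x\right)=-\partial_{x}P\left(\Gamma\left(x,1\right)\leq t\right)$, which is nonnegative because $P\left(\Gamma\left(x,1\right)\leq t\right)$ decreases in the shape $x$. For a binomial distribution the analogous $\rho$ is built from the incomplete Beta function.

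Using in addition the identity $\int_{m-1}^{m}\rho\left(x+1\right)\,\mathrm{d}x=P\left(M=m\right)$, the two ratios appearing in the conjecture can be written as $\psi$-weighted averages: the first is the average of $x\mapsto\rho\left(x\right)/\psi\left(x\right)$ over $\left[m,m+1\right]$, and the second is the average of $x\mapsto\rho\left(x+1\right)/\psi\left(x\right)$ over $\left[m-1,m\right]$. An elementary averaging lemma, in the style of Lemma~\ref{increacedensity}, then applies: if $q$ is monotone and $\psi\geq0$ then $m\mapsto\left(\int_{m}^{m+1}q\psi\right)/\left(\int_{m}^{m+1}\psi\right)$ is monotone in the same direction, since each window average lies between the values of $q$ at the two endpoints and consecutive windows share an endpoint. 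Hence, apart from a couple of easily checked boundary terms, the whole conjecture follows once we know that $x\mapsto\rho\left(x\right)/\psi\left(x\right)$ is increasing and $x\mapsto\rho\left(x+1\right)/\psi\left(x\right)$ is decreasing on the interior of the support; loosely, $\rho$ must grow faster than $\psi$ but slower than the unit shift of $\psi$.

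The substance of the proof is establishing these two monotonicities, and here I would reuse the calculus behind Theorem~\ref{Thm:Gamma}. In the Poisson case $D\left(x\right)=x\ln\left(x/t\right)-x+t$, $G\left(x\right)=\pm\left(2D\left(x\right)\right)^{1/2}$, and $\psi\left(x\right)=\tau^{-1/2}\exp\left(-D\left(x\right)\right)\left|\ln\left(x/t\right)\right|/\left(2D\left(x\right)\right)^{1/2}$, while $\rho$ is the shape-derivative of the incomplete Gamma function; with the substitution $u=x/t$ the ratio $\rho/\psi$ should, exactly as in that proof, reduce to a function of $u$ alone whose monotonicity comes down to an elementary inequality of the same flavour as $\ell\left(u\right)=u-2\ln u-1/u\geq0$ for $u>1$. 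A cleaner alternative is to exploit the duality $G_{P}\left(m\right)=-G_{\Gamma}\left(t\right)$ noted after \eqref{Eq:PoissonGammaLink}, with $G_{\Gamma}$ the $G$-transform of $\Gamma\left(m,1\right)$: it gives $\Phi\left(G_{P}\left(m\right)\right)=1-\Phi\left(G_{\Gamma}\left(t\right)\right)$, so the entire statement becomes one about the Gamma family with varying shape, where Theorem~\ref{Thm:Gamma} already supplies one of the needed comparisons for free, and differentiating in the shape while feeding in the convexity estimate $\ell^{\prime}\left(u\right)=\left(1-1/u\right)^{2}\geq0$ should finish the other, a continuous monotone ratio in the shape yielding the discrete one by a Cauchy mean-value argument.

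The main obstacle is exactly this last step. In contrast to $\psi$, which is elementary, the density $\rho$ (or its shape-derivative avatar) carries the incomplete Gamma --- and in the binomial case the incomplete Beta --- function, hence digamma terms, so the monotonicity of $\rho/\psi$ is a genuine special-function estimate rather than a routine differentiation, and pushing the one-variable reduction through will require either the right integral representation of $\rho$ or a total-positivity structure. Moreover, no analogue of Theorem~\ref{Thm:Gamma} for the Beta family has been proved, so the scaffolding above is simply unavailable in the binomial case; I would expect this line of attack to settle the Poisson case completely and to reduce the binomial case to an analogous Beta statement. One simplification there: the ``increasing'' and ``decreasing'' halves are interchanged by the reflection $m\mapsto n-m$, which replaces $\mathrm{Bin}\left(n,p\right)$ by $\mathrm{Bin}\left(n,1-p\right)$, so only one of the two genuinely needs to be proved.
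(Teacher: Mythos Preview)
The paper does not prove this statement: it is presented as a conjecture, supported only by numerical evidence, and the text immediately following it says explicitly ``The conjecture is supported by numerous numerical computations.'' So there is no ``paper's own proof'' to compare against.

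Your write-up is therefore not a competing proof but a proposed strategy, and you are candid about this: the reduction to pointwise monotonicity of $\rho/\psi$ and of $\rho(\cdot+1)/\psi$ via the window-averaging lemma is clean and correct (the argument that a $\psi$-weighted average of an increasing $q$ over $[m,m+1]$ lies in $[q(m),q(m+1)]$, hence consecutive averages are ordered, is fine), and the integral representations $\Phi(G(m+1))-\Phi(G(m))=\int_m^{m+1}\psi$ and $P(M=m)=\int_m^{m+1}\rho$ with $\rho(x)=-\partial_x P(\Gamma(x,1)\le t)$ are valid in the Poisson case, including at the left endpoint. The symmetry observation for the binomial case is also correct and useful.

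But the substance of the conjecture is precisely the step you flag as ``the main obstacle'': the monotonicity of $\rho/\psi$. Here your appeal to Theorem~\ref{Thm:Gamma} and the duality $G_P(m)=-G_\Gamma(t)$ does not do the work you suggest. That theorem fixes the shape $m$ and varies the scale (equivalently $t$), proving that $t\mapsto tG_\Gamma'(t)$ is increasing; what you need is monotonicity in the \emph{shape} variable $x$ with $t$ fixed, since $\rho(x)$ is a shape-derivative of the incomplete Gamma function. These are genuinely different directions in the $(m,t)$ plane, and the inequality $\ell'(u)=(1-1/u)^2\ge0$ used in Theorem~\ref{Thm:Gamma} controls the scale direction, not the shape direction. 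So ``differentiating in the shape while feeding in $\ell'(u)\ge0$'' is not a plan so much as a hope; the digamma terms you anticipate do not cancel against anything already proved in the paper. In short: the reduction is sound, but the hard analytic content of the conjecture has been relocated rather than resolved, and nothing in the paper supplies the missing estimate.
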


The conjecture is supported by numerous numerical computations. If it holds
the intersection property follows by Lemma \ref{increacedensity}. The
increasing property implies log-concavity of the distribution but for instance
the geometric distribution is log-concave but does not satisfy the
intersection property. We have some indications that the conjecture also holds
for any distribution of a sum of independent Bernoulli random variables.\input{increas.TpX}

\begin{theorem}
\label{Thm:Poisson}The intersection property is satisfied for any Poisson
random variable.
\end{theorem}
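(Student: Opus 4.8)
The plan is to transfer the statement to the Gamma distributions via the waiting-time identity \eqref{Eq:PoissonGammaLink} together with the relation $G_{P}(m)=-G_{\Gamma}(t)$ from the preceding section, and then to split it into two one-sided inequalities. Fix $t>0$, let $M\sim Po(t)$ with $G$-transform $G_{P}$, and let $m$ be a nonnegative integer. For $m=0$ we must check $0<\Phi(-\sqrt{2t})<\mathrm{e}^{-t}$, which is immediate from $D(Po(0)\Vert Po(t))=t$ and the Gaussian bound $\Phi(-x)<\tfrac12\mathrm{e}^{-x^{2}/2}$ (strict for $x>0$). Now take $m\ge1$, let $T_{k}\sim\Gamma(k,1)$, and write $G_{\Gamma}^{(k)}$ for the $G$-transform of $T_{k}$, a strictly increasing bijection of $(0,\infty)$ onto $\mathbb{R}$. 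From $P(M\ge k)=P(T_{k}\le t)$ we get $P(M<m)=1-P(T_{m}\le t)$ and $P(M\le m)=1-P(T_{m+1}\le t)$, while $\Phi(G_{P}(m))=1-\Phi(G_{\Gamma}^{(m)}(t))$, so the intersection inequalities at $m$ are equivalent to
\[
P(T_{m+1}\le t)<\Phi\bigl(G_{\Gamma}^{(m)}(t)\bigr)<P(T_{m}\le t).
\]

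The right inequality is exactly Theorem \ref{Thm:Gamma} applied to $T_{m}$, and it is strict because the density ratio shown to be monotone in that proof is strictly increasing ($\ell'(u)=(1-1/u)^{2}>0$ for $u\neq1$). For the left inequality I would introduce $Y:=(G_{\Gamma}^{(m)})^{-1}(Z)$, a random variable on $(0,\infty)$ with distribution function $t\mapsto\Phi(G_{\Gamma}^{(m)}(t))$ and density $\phi(G_{\Gamma}^{(m)}(t))\,(G_{\Gamma}^{(m)})'(t)$; the left inequality asserts that $Y$ is strictly stochastically smaller than $T_{m+1}$. By Lemma \ref{increacedensity}, used in the orientation of the proof of Theorem \ref{Thm:Gamma} (the variable whose density sits in the numerator of the ratio is the stochastically larger one), it suffices that
\[
t\mapsto\frac{t^{m}\mathrm{e}^{-t}/m!}{\phi\bigl(G_{\Gamma}^{(m)}(t)\bigr)\,(G_{\Gamma}^{(m)})'(t)}
\]
is increasing. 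The key cancellation is that $\tfrac12 G_{\Gamma}^{(m)}(t)^{2}=D(\Gamma(m,t/m)\Vert\Gamma(m,1))=t-m-m\ln(t/m)$ forces $\phi(G_{\Gamma}^{(m)}(t))=t^{m}\mathrm{e}^{-t}\mathrm{e}^{m}/(\tau^{1/2}m^{m})$, which is proportional to the $\Gamma(m+1,1)$ density — this is exactly why shape parameter $m+1$ appears — so the displayed ratio collapses to a positive constant times $1/(G_{\Gamma}^{(m)})'(t)$. Hence the left inequality is equivalent to \emph{concavity of $G_{\Gamma}^{(m)}$}.

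To establish that concavity, substitute $u=t/m$ and write $G_{\Gamma}^{(m)}(t)=\sqrt{m}\,q(u)$ with $q(u)=\operatorname{sgn}(u-1)\sqrt{2h(u)}$ and $h(u)=u-1-\ln u$; concavity of $G_{\Gamma}^{(m)}$ is concavity of $q$. For $u\neq1$, $q''(u)$ has the sign of $\operatorname{sgn}(u-1)\bigl(2h(u)h''(u)-h'(u)^{2}\bigr)$, and a short computation gives $u^{2}\bigl(2h(u)h''(u)-h'(u)^{2}\bigr)=2(u-1-\ln u)-(u-1)^{2}=-\kappa(u)$ where $\kappa(u)=(u-1)^{2}-2(u-1)+2\ln u$ satisfies $\kappa(1)=0$ and $\kappa'(u)=2(u-1)^{2}/u\ge0$. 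Thus $\kappa>0$ on $(1,\infty)$ and $\kappa<0$ on $(0,1)$, so $q''<0$ for all $u\neq1$; since $q$ extends analytically across $u=1$ with $q''(1)=-\tfrac23$, $q$ is strictly concave on $(0,\infty)$. Therefore $(G_{\Gamma}^{(m)})'$ is strictly decreasing and positive, the displayed ratio is strictly increasing, and the left inequality holds strictly. Combined with the right inequality this gives the intersection property for every Poisson random variable.

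I expect the main obstacle to be precisely the concavity of $G_{\Gamma}^{(m)}$ — equivalently the monotonicity of $(G_{\Gamma}^{(m)})'$ — handled uniformly across the mean $t=m$, where one must verify that $q$, and hence $q'$ and $q''$, extend smoothly so that strict decrease of $q'$ on each side glues into strict concavity on all of $(0,\infty)$; that this reduces to the elementary inequality for $\kappa$ is the fortunate analogue of the $\ell$-computation in Theorem \ref{Thm:Gamma}. A secondary, purely bookkeeping, difficulty is keeping the directions of the various stochastic orderings straight, in particular fixing the orientation of Lemma \ref{increacedensity} consistently with how it is used in the proof of Theorem \ref{Thm:Gamma}.
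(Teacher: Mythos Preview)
Your proof is correct, and in fact more complete than the paper's own argument. Both proofs handle the inequality $P(M<m)<\Phi(G_P(m))$ identically, by combining the Poisson--Gamma waiting-time link with Theorem~\ref{Thm:Gamma}. The difference is entirely in the second inequality $\Phi(G_P(m))<P(M\le m)$. The paper's proof is only an outline: it states that this inequality is verified case by case for $m\le 5$ and then, for $m>5$, is ``proved using the intersection property'' (presumably a slip for the increasing property of Conjecture~8, or some related monotonicity, since taken literally this would be circular). No details are supplied.

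Your route is genuinely different and cleaner. By observing that $\phi(G_\Gamma^{(m)}(t))$ is exactly a constant multiple of the $\Gamma(m{+}1,1)$ density, you reduce the second inequality to the single analytic fact that $G_\Gamma^{(m)}$ is strictly concave, and you prove this via the elementary computation $\kappa'(u)=2(u-1)^2/u\ge 0$, perfectly parallel to the $\ell'(u)=(1-1/u)^2$ calculation in Theorem~\ref{Thm:Gamma}. This gives a uniform argument valid for every $m\ge 1$ and every $t>0$, with no case analysis, and it incidentally yields the new companion result that the $G$-transform of $\Gamma(m,1)$ is \emph{concave} (Theorem~\ref{Thm:Gamma} effectively shows that $tG'(t)$ is increasing, which is a different monotonicity). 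Your separate treatment of $m=0$ via $\Phi(-x)<\tfrac12 e^{-x^2/2}$ is also fine; that bound follows from the Mills-ratio inequality $\Phi(-x)<\phi(x)/x$ since $e^{x^2/2}\Phi(-x)$ then has negative derivative and equals $\tfrac12$ at $x=0$. The only cosmetic point is that Lemma~\ref{increacedensity} as stated in the paper has the ordering reversed; you already noted this and used the correct orientation consistent with the proof of Theorem~\ref{Thm:Gamma}.
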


\begin{proof}
(Outline) The inequality%
\[
P\left(  M<m\right)  \leq P\left(  Z\leq G\left(  m\right)  \right)
\]
follows from Theorem \ref{Thm:Gamma} combined with Equation
\ref{Eq:PoissonGammaLink}. The inequality%
\[
P\left(  M\leq m\right)  \geq P\left(  Z\leq G\left(  m\right)  \right)
\]
can be proved case by case for $m\leq5.$ For $m>5$ it is proved using the
intersection property$.$
\end{proof}

Theorem (\ref{Thm:Poisson}) gives bounds on the tail probabilities for Poisson
distributions that are far better than what can be found in the literature
(see for instance \cite{Glynn1987}). At the same time the theorem gives bounds
on the median that are compatible with the bounds in the literature
\cite{Chen1986, Hamza1995}.

\section{Discussion}

Many goodness-of -fit tests involve parameter estimation (that is, the model
is a parametric family of distributions, not a single distribution). In such
cases, the $G^{2}$-statistic may converge slower to a $\chi^{2}$-distribution
than the $\chi^{2}$-statistic \cite{Perkins2011}. How such results are related
to the presents results is now clear yet. Since we only discuss the cases with
one or two bins our results can be reformulated in terms of conficence
intervals. We hope to cover confidence intervals in a future paper.

In the present paper the focus has been on the two bin case. We do not know if
something equivalent of the intersection property can be formulated for more
than two bins. For results on more than two bins it may be better to try to
generalize the results on asymptotics presented in \cite{Gyorfi2012}.

\section{Acknowledgement}

The authors want to thank Unnikrishnan Jayakrishnan for useful discussions. We
also want to thank Jen\H{o} Reiczigel and L\'{\i}dia Rejt\H{o} for helping
with some numerical computations at an early stage of the developing the ideas
presented in this paper and L\'{a}szl\'{o} Gy\"{o}rfi who we worked in
parallel on other aspects of the intersection conjecture. Finally we would
like to thank Sune Jakobsen for comments to this manuscript.

\bibliographystyle{ieeetr}
\bibliography{database1}%

\pagebreak
\end{document}